\newtheorem{theorem}{Theorem}[section]
\newtheorem{lemma}[theorem]{Lemma}
\theoremstyle{definition}
\theoremstyle{definition}
\newcommand{\dd}{\;{\rm{d}}}
\newcommand{\ssl}[2]{\textrm{SL}(#1,\mathbb{#2})}
\newcommand{\LL}{$L$-}
\newcommand{\bra}[1]{\left(#1\right)}
\newcommand{\sumsom}{\underset{\psi\mod q}{\sum {}^*}}
\newcommand{\sumeven}{\underset{\underset{\scriptstyle{ \psi(-1)=1}}{\psi\mod q}}{\sum {}^*}}
\newcommand{\sumodd}{\underset{\underset{\scriptstyle{ \psi(-1)=-1}}{\psi\mod q}}{\sum {}^*}}
\newcommand{\aaa}{\overline{a}}
\newcommand{\psibar}{\bar{\psi}}
\newcommand{\gauss}{\tau(\psi)}
\newcommand{\gaussbar}{\tau(\bar\psi)}
\newcommand{\summ}{\sum_{m=1}^\infty}
\newcommand{\lfa}{L(s,\pi\times \psi)}
\newcommand{\lfaa}{L(s,\pi)}
\newcommand{\lfb}{L(s,\tilde\pi \times\psibar)}
\newcommand{\lfbb}{L(s,\tilde\pi)}
\newcommand{\tth}{^{\text{th}}}
\newcommand{\aleft}[1]{A(1,\dots,1,\overset{ \text{position } #1 }{\overbrace{q,1,\dots,1}})}
\newcommand{\reff}[1]{(\ref{#1})}
\begin{document}

\title{\scshape Voronoi Summation Formulae on GL($n$)}

\author{Fan Zhou}
\maketitle

\begin{abstract} 
We discover new Voronoi formulae for automorphic forms on GL($n$) for $n\geq 4$. There are $[n/2]$ different Voronoi formulae on GL($n$), which are Poisson summation formulae weighted by Fourier coefficients of the automorphic form with twists by some hyper-Kloosterman sums.
\\
\\
MSC: 11F30 (Primary), 11F55 (Secondary)
\end{abstract}

\section{Introduction}
A Voronoi summation formula of an automorphic form is a 
Poisson summation formula weighted by Fourier coefficients of the automorphic form with twists by some additive arithmetic functions. 
The Voronoi formula of an automorphic form on GL(2) is well known and has a lot of application.

A Voronoi formula of an automorphic form on GL(3) was firstly proved  by Miller and Schmid in \cite{millerschmid1}. Later, a Voronoi formula was established for GL($n$) for $n\geq 4$ in \cite{millerschmid2}, \cite{goldfeldli1}, and \cite{goldfeldli2}. 
An adelic version was established in \cite{ichinotemplier}.
The Voronoi formula has important application, such as a nontrivial bound on the exponential sum in  \cite{miller} and subconvexity bounds in  \cite{lisubconvexity}.

More recently, Li and Miller established a different type of Voronoi formula on GL(4)\footnote{Their result was announced at
\textit{Representation Theory, Automorphic Forms, and Complex Geometry, a conference in honor of the 70th birthday of Wilfried Schmid
} in May 2013. The slides of Li's talk are available in the link: \url{http://www.math.harvard.edu/conferences/schmid_2013/docs/li.pdf}}. Their formula is an equality between a weighted sum of Fourier coefficients of an automorphic form on GL(4) 
and a dual sum, both twisted by classical Kloosterman sums. This formula is different from those in \cite{millerschmid2}, \cite{goldfeldli1}, \cite{goldfeldli2}, and \cite{ichinotemplier}
and inspires us to develop this paper.

In this paper, we discover more Voronoi formulae on GL($n$) when $n\geq 4$. One of these new formulae overlaps the formula of Li and Miller. Our formula on GL($n$) is an equality between a sum of Fourier coefficients of an automorphic form twisted by hyper-Kloosterman sums of dimension $(k-1)$ and a dual sum twisted by hyper-Kloosterman sums of dimension $(n-k-1)$. The Voronoi formula of \cite{millerschmid1}, \cite{millerschmid2}, \cite{goldfeldli1}, and \cite{goldfeldli2} are the case of $k=1$. Li and Miller's Voronoi formula on GL(4) is the case of $n=4$ and $k=2$.
For an automorphic form on GL($n$), there are $[n/2]$ different Voronoi formulae, which correspond to $k=1,\dots,[n/2]$ respectively.

For an integer $k\geq 1$ define the $(k-1)$-dimensional hyper-Kloosterman sum by
$$Kl_k(m,q)=\sum_{\underset{\scriptstyle (x_1,q)=\dots=(x_{k-1},q)=1}{x_1, \dots, x_{k-1}\mod q}} e\bra{\frac{x_1+\dots+x_{k-1}+m\overline{x_1\dots x_{k-1}}}{q}}$$ for a prime number $q$.
We state our formulae in the following.

\begin{theorem}\label{main}
Let $\pi$ be an even Maass cusp form for $\ssl{n}{Z}$ 
of the spectral parameter $(\lambda_1,\dots,\lambda_n)\in\mathbb{C}^n$ and $n\geq 2$. Let $A(m_1,\dots,m_{n-1})$ be the Fourier coefficient of $\pi$. Let 
$\omega\in C_c^\infty(0,\infty)$ be a test function with $\tilde{\omega}$ its Mellin transform
and let $q$ be a prime number, $a$ and $\aaa$ two integers with $a\aaa\equiv 1\mod q$. 
For $1\leq k \leq n-1,$ we have 
\begin{align*}
&\summ A(1,\dots,1,m)Kl_k(am,q)\omega(m)\\
&\quad+\sum_{2\leq l\leq k}\summ(-1)^{l+k}q^{l-1} A(1,\dots,1,\overset{ \text{position }l}{\overbrace{q,1,\dots,1,m}})\omega\bra{{m}{q^l}}\\
\quad&=\frac{q^k}{2} \summ \frac{A(m,1,\dots,1)}{m}\\
\quad&\quad
\times\bra{Kl_{n-k}(\bar am,q)\bra{\Omega_++\Omega_-}\bra{\frac{m}{q^n}}
+Kl_{n-k}(-\bar am,q)\bra{\Omega_+-\Omega_-}\bra{\frac{m}{q^n}}}\\
\quad&\quad+\sum_{2\leq l\leq n-k}\summ (-1)^{n-k+l}q^{k-1} \frac{A(\overset{\text{position } l}{\overbrace{m,1,\dots,1,q}},1,\dots,1)}{m}\Omega_+\bra{\frac{m}{q^{n-l}}},
\end{align*}
where $\Omega_{\pm}$ is defined as $$\Omega_+(x)=\frac{1}{2\pi i}\int_{\Re {s}=-\sigma}\tilde{\omega}(s)\pi^{-n(1/2-s)}\prod_{j=1}^n \Gamma\bra{\frac{1-s-\overline{\lambda_j}}{2}}\Gamma\bra{\frac{s-{\lambda_j}}{2}}^{-1}x^s\dd s$$
and$$
\Omega_-(x)=\frac{1}{2\pi i}\int_{\Re {s}=-\sigma}\tilde{\omega}(s)i^{-n}\pi^{-n(1/2-s)}\prod_{j=1}^n \Gamma\bra{\frac{2-s-\overline{\lambda_j}}{2}}\Gamma\bra{\frac{s+1-{\lambda_j}}{2}}^{-1}x^s\dd s.$$
\end{theorem}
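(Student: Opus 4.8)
The plan is to linearise the hyper-Kloosterman weights into Dirichlet characters, apply the functional equation of the twisted $L$-function character by character, and then re-synthesise the dual hyper-Kloosterman sums. First I would establish the multiplicative expansion of the weight. A change of variables $m\mapsto m\,\overline{x_1\cdots x_{k-1}}$ in the definition of $Kl_k$ factors the exponential sum into $k$ independent Gauss sums, giving
\[
\sum_{\substack{m\bmod q\\(m,q)=1}}\overline{\chi}(m)\,Kl_k(m,q)=\tau(\overline{\chi})^k
\]
for every $\chi\bmod q$, with the convention $\tau(\chi_0)=-1$. Multiplicative orthogonality then yields $Kl_k(am,q)=\tfrac{1}{q-1}\sum_{\chi}\tau(\overline\chi)^k\chi(a)\chi(m)$ for $(m,q)=1$. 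Inserting the Mellin inversion of $\omega$ and moving the contour into the domain of absolute convergence turns the principal sum on the left into $\tfrac{1}{q-1}\sum_\chi\tau(\overline\chi)^k\chi(a)\tfrac{1}{2\pi i}\int\tilde\omega(s)L(s,\pi\times\chi)\,ds$, where for primitive $\chi$ (all nontrivial characters, since $q$ is prime) the Dirichlet series is exactly $L(s,\pi\times\chi)$.

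Next I would invoke the $\mathrm{GL}(n)\times\mathrm{GL}(1)$ functional equation, whose root number carries the factor $\tau(\chi)^n$ and whose archimedean factor depends only on $\chi(-1)=\pm1$; this is where the hypothesis that $\pi$ is even enters, since it makes the parity shifts uniform across the $\lambda_j$, so that the even and odd gamma quotients are exactly those defining $\Omega_+$ and $\Omega_-$. The decisive algebraic step is the collapse
\[
\tau(\overline\chi)^k\,\tau(\chi)^n=\bigl(\tau(\chi)\tau(\overline\chi)\bigr)^k\tau(\chi)^{n-k}=\chi(-1)^k q^k\,\tau(\chi)^{n-k},
\]
which trades the dimension-$k$ data for dimension-$(n-k)$ data. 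Expanding $L(1-s,\tilde\pi\times\overline\chi)=\sum_m A(m,1,\dots,1)\overline\chi(m)m^{s-1}$, splitting the characters by parity, and applying the inverse transform $\tfrac{1}{q-1}\sum_\chi\tau(\chi)^{n-k}\chi(x)=Kl_{n-k}(\overline x,q)$ reassembles the sums $Kl_{n-k}(\pm\overline a m,q)$ attached to $\Omega_+\pm\Omega_-$. The conductor powers $q^{n(1/2-s)}m^{s-1}$ collapse to $m^{-1}(m/q^n)^s$, the $\pi$-powers reduce to $\pi^{-n(1/2-s)}$ using $\sum_j\lambda_j=\sum_j\overline{\lambda_j}=0$, the odd root number contributes the factor $i^{-n}$ sitting in $\Omega_-$, and the remaining constants combine into $\tfrac{q^k}{2}$.

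The main obstacle is the degenerate character $\chi=\chi_0$, for which the twisted Dirichlet series is $L(s,\pi)$ with its Euler factor at $q$ deleted and the conductor is $1$ rather than $q^n$. Restoring the local factor at $q$ forces in the coefficients $A(1,\dots,1,q,1,\dots,1,m)$ on the left and $A(m,1,\dots,1,q,1,\dots,1)$ on the right through the Hecke relations, since the degree-$l$ term of the local $L$-factor is the elementary symmetric polynomial $e_l$ in the Satake parameters. Tracking these contributions, with the signs $(-1)^l$ from expanding $\prod_i(1-\alpha_i q^{-s})$, the value $\tau(\chi_0)^k=(-1)^k$, and the rescalings $m\mapsto mq^l$, is what produces the two correction sums over $2\le l\le k$ and $2\le l\le n-k$ with their prefactors $(-1)^{l+k}q^{l-1}$ and $(-1)^{n-k+l}q^{k-1}$; verifying that the odd-character piece cancels from the corrections, so that only $\Omega_+$ survives there, is the most delicate bookkeeping.

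Throughout I would justify the analytic steps: absolute convergence of the $m$-sums in a right half-plane from Rankin--Selberg bounds on the $A(\cdot)$, the rapid decay of $\tilde\omega(s)$ on vertical lines against the polynomial size of the Gauss sums and the Stirling decay of the gamma quotients, and the resulting legitimacy of interchanging the $m$-sum, the $\chi$-sum and the $s$-integral and of shifting the contour across the critical strip; the same bounds guarantee that the dual expressions on the right converge and that $\Omega_\pm$ are well defined for the chosen $\sigma$.
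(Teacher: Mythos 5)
Your proposal is correct and follows essentially the same route as the paper: the paper likewise expands the hyper-Kloosterman weights into Dirichlet characters via Gauss sums (its identities \reff{kloostermaneven}--\reff{kloostermanodd}), applies the twisted functional equations rewritten through $\tau(\psi)\tau(\bar\psi)=\psi(-1)q$ exactly as in your ``decisive algebraic step,'' handles the trivial character by restoring the missing $q$-part through Hecke relations (its Dirichlet polynomials $H_l$, $\tilde H_l$ and Lemma \ref{cruciallemma}), and finishes by Mellin inversion and a contour shift. The only difference is bookkeeping: the paper splits into even and odd character sums from the outset (Theorems \ref{maineven} and \ref{mainodd}), packaging the trivial-character correction into the series $\mathsf{Z}(s,q)$, whereas you treat all characters at once and split by parity at the functional-equation stage.
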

\begin{proof}
This is literally the sum of Theorem \ref{maineven} and Theorem \ref{mainodd}.
\end{proof}

Our proof is based on the functional equations of the twisted \LL functions of $\pi$ by (multiplicative) Dirichlet characters. 
The relationship between the Voronoi formulae and the functional equations of $L$-functions is known to experts.
The same method was used in Section 4 of \cite{goldfeldli1} and in          proving Lemma 1.3 in \cite{buttcane} for a special case on GL(3).

Our proof does not depend on automorphicity but depends only on the family of the functional equations of GL(1)-twisted $L$-functions. 
Functional equations of GL(1) twists are consequences of automorphicity but they do not imply automorphicity on GL($n$) when $n\geq 3$.
In view of the converse theorem on GL($n$) of Cogdell and Piatetski-Shapiro \cite{converse}, a proof based on GL(1)-twisted functional equations is stronger than a proof based on automorphicity.

Our theorem and proof can be easily applied to many conjectural functorial lifts. 
For example, although the Rankin-Selberg convolutions on GL($m$)$\times$GL($n$) are not generally known to be automorphic on GL($m\times n$), except the few cases of GL(2)$\times$GL(2) and GL(2)$\times$GL(3), the functional equations of GL($m$)$\times$GL($n$)$\times$GL(1) are well known. 
Thus, we have the Voronoi formulae for the Rankin-Selberg convolutions on GL($m$)$\times$GL($n$). Considering the Langlands-Shahidi method and the general Rankin-Selberg method, there are numerous functorial cases to have Voronoi formulae without being known to be automorphic.

The obvious weakness of our result is that the denominator $q$ in the hyper-Kloosterman sums has to be a prime number. More general formulae with arbitrary integer $q$ awaits exploration.




\subsection*{Acknowledgment}
The author wants to thank Wenzhi Luo, Roman Holowinsky, and Eyal Kaplan for helpful discussion. The author wants to thank Dorian Goldfeld for introducing this topic to him.

\section{Background}
Let $\pi$ be an even Maass cusp form for $\ssl{n}{Z}$ with the spectral parameter $(\lambda_1,\dots,\lambda_n)\in \mathbb{C}^n$. Let $A(*,\dots,*)$ be the Fourier-Whittaker coefficient of $\pi$. We have $$A(\pm m_1,\dots,\pm m_{n-1})=A(m_1,\dots,m_{n-1}),$$
$$\overline{A(m_1,\dots,m_{n-1})}=A(m_{n-1},\dots,m_1),$$
and $A(1,\dots,1)=1.$ We refer to \cite{goldfeld} for the definitions and the basic results of Maass forms for $\ssl{n}{Z}$.
The dual (contragredient) Maass form of $\pi$ is denoted by $\tilde{\pi}$. Let $B(*,\dots,*)$ be the Fourier-Whittaker coefficient of $\tilde\pi$ and we have 
$$\overline{A(m_1,\dots,m_{n-1})}=B(m_{n-1},\dots,m_1).$$

The standard $L$-function of $\pi$ is $$L(s,\pi)=\summ \frac{A(1,\dots,1,m)}{m^s}.$$
It satisfies the functional equation 
\begin{equation}\label{FEstandard}
\lfaa = G_+(s)L(1-s,\tilde \pi),\end{equation}
where $$G_+(s)= \pi^{-n(1/2-s)}\prod_{j=1}^n \Gamma\bra{\frac{1-s-\overline{\lambda_j}}{2}}\Gamma\bra{\frac{s-{\lambda_j}}{2}}^{-1}.$$

Let $q$ be a prime number. Let $\psi$ be a nontrivial Dirichlet character mod $q$ and $\gauss$ its Gauss sum.
The $\psi$-twisted $L$-function is $$L(s,\pi\times\psi)=\summ \frac{\psi(m)A(1,\dots,1,m)}{m^s}.$$ 

Let $k$ be an integer with $1\leq k \leq n-1$. In the case of $\psi(-1)=1$, the twisted \LL function satisfies the functional equation
$$\lfa =\gauss^{n} q^{-ns}G_+(s)L(1-s,\tilde{\pi}\times \bar \psi),$$ 
which can be rewritten as 
\begin{equation}\label{FEeven}
\gaussbar^k\lfa =\gauss^{n-k} q^{k-ns}G_+(s)L(1-s,\tilde{\pi}\times \bar \psi).
\end{equation}
In the case of $\psi(-1)=-1$, the twisted \LL function satisfies the functional equation
$$\lfa =\gauss^{n} q^{-ns}G_-(s) L(1-s,\tilde{\pi}\times \bar \psi),$$
which can be rewritten as 
\begin{equation}\label{FEodd}
\gaussbar^k\lfa =\gauss^{n-k} q^{k-ns}G_-(s) L(1-s,\tilde{\pi}\times \bar \psi),
\end{equation}
where 
$$G_-(s)=i^{-n}\pi^{-n(1/2-s)}\prod_{j=1}^n \Gamma\bra{\frac{2-s-\overline{\lambda_j}}{2}}\Gamma\bra{\frac{s+1-{\lambda_j}}{2}}^{-1}.$$



\subsection{Hyper-Kloosterman Sums}
Let $q$ be a prime number. For abbreviation, we define $e(x)=\exp(2\pi i x)$.
Let $k$ be a positive integer.
The $(k-1)$-dimensional hyper-Kloosterman sum is defined as
$$Kl_k(m,q)=\sum_{\underset{\scriptstyle (x_1,q)=\dots=(x_{k-1},q)=1}{x_1, \dots, x_{k-1}\mod q}} e\bra{\frac{x_1+\dots+x_{k-1}+m\overline{x_1\dots x_{k-1}}}{q}}$$
and we shall note that if $q|m$,
\begin{equation}\label{kloostermandegenerate0}
Kl_k(m,q)=(-1)^{k-1}.
\end{equation}
In the degenerate case of $k=1$, we have the additive character $$Kl_1(m,q) = e\bra{\frac{m}{q}}.$$ When $k=2$, 
the 1-dimensional hyper-Kloosterman sum is identical to the classical Kloosterman sum.

We define $\sumsom$ as the summation over nontrivial Dirichlet characters $\psi$ of mod $q$.
It is known 
\begin{equation}\label{kloostermaneven}
\sumeven \gauss^k \psibar(m)=
\begin{cases}
\frac{q-1}{2}\bra{Kl_{k}(m,q)+Kl_{k}(-m,q)}-(-1)^k,&\text{ if }(m,q)=1\\
0, &\text{ otherwise}.
\end{cases}
\end{equation}
It is also known
\begin{equation}\label{kloostermanodd}
\sumodd \gauss^k \psibar(m)=\frac{q-1}{2}\bra{Kl_{k}(m,q)-Kl_{k}(-m,q)}.
\end{equation}
We will use the family of (multiplicative) Dirichlet characters to recover the additive characters and the hyper-Kloosterman sums by the two formulae above.

\section{Even Part}
Let $q$ be a prime number, $a$ and $\aaa$ two integers with $a\aaa\equiv 1\mod q$. 
Let $k$ be an integer and $1\leq k \leq n-1$.  

Define 
two Dirichlet series
$$L_q(s,\pi)=\sum_{q|m,m>0}\frac{A(1,\dots,1,m)}{m^s}$$ 
and $$L_q(s,\tilde\pi)=\sum_{q|m,m>0}\frac{A(m,1,\dots,1)}{m^s}.$$ 
Define 
two Dirichlet series
$$\mathsf{Z}(s,q)=\sumeven \gaussbar^k \psi(a) \lfa + (-1)^k\bra{\lfaa-qL_q(s,\pi)}$$
and $$\tilde{\mathsf{Z}}(s,q)=\sumeven \gauss^{n-k} \psi(a)\lfb + (-1)^{n-k}\bra{\lfbb-qL_q(s,\tilde\pi)}.$$

\begin{lemma}\label{zdirichlet}
We have the identities 
$$\mathsf{Z}(s,q)=\frac{1}{2}\summ \bra{Kl_k(am,q)+Kl_k(-am,q)} \frac{A(1,\dots,1,m)}{m^s}$$
and 
$$\tilde{\mathsf{Z}}(s,q)=\frac{1}{2}\summ
\bra{Kl_{n-k}(\bar am,q)+Kl_{n-k}(-\bar am,q)}
 \frac{A(m,1,\dots,1)}{m^{s}}.$$
\end{lemma}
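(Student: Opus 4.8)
The plan is to prove both identities by expanding the twisted $L$-functions into their Dirichlet series and interchanging the order of summation, so that the sum over characters is pushed inside and evaluated by the Kloosterman identity \reff{kloostermaneven}. Starting from the first term of $\mathsf{Z}(s,q)$ and writing $\lfa=\summ \psi(m)A(1,\dots,1,m)m^{-s}$, I would obtain
\begin{equation*}
\sumeven \gaussbar^k \psi(a)\lfa=\summ \frac{A(1,\dots,1,m)}{m^s}\,\sumeven \gaussbar^k\psi(am).
\end{equation*}
The inner sum $\sumeven\gaussbar^k\psi(am)$ becomes, after replacing $\psi$ by $\psibar$ (which permutes the even nontrivial characters, sends $\gaussbar$ to $\gauss$ and $\psi(am)$ to $\psibar(am)$), precisely the left-hand side of \reff{kloostermaneven} with $m$ there replaced by $am$. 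Since $a$ is a unit modulo $q$, the coprimality condition $(am,q)=1$ is equivalent to $q\nmid m$, and so \reff{kloostermaneven} evaluates the inner sum in terms of the symmetric combination $Kl_k(am,q)+Kl_k(-am,q)$, together with a uniform additive constant $-(-1)^k$, when $q\nmid m$, and as $0$ when $q\mid m$.

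The remaining work is the bookkeeping of the degenerate and constant terms, and this is exactly what the correction term $(-1)^k(\lfaa-qL_q(s,\pi))$ is built to handle. Summing the uniform constant $-(-1)^k$ over the range $q\nmid m$ against $A(1,\dots,1,m)m^{-s}$ produces $-(-1)^k(\lfaa-L_q(s,\pi))$, while the terms with $q\mid m$ are absent from the character sum but, by the degenerate evaluation \reff{kloostermandegenerate0}, each carry $Kl_k(\pm am,q)=(-1)^{k-1}$. I would then check that $(-1)^k(\lfaa-qL_q(s,\pi))$ is tailored to do two things simultaneously: cancel the spurious constant $-(-1)^k(\lfaa-L_q(s,\pi))$ coming from the non-degenerate range, and supply exactly the $q\mid m$ contributions dictated by \reff{kloostermandegenerate0}. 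Once these match, the sum restricted to $q\nmid m$ is completed to the full Dirichlet series $\summ$, which yields the stated formula for $\mathsf{Z}(s,q)$.

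For $\tilde{\mathsf{Z}}(s,q)$ I would run the identical argument with $k$ replaced by $n-k$ and $a$ by $\aaa$. Expanding $\lfb=\summ\psibar(m)A(m,1,\dots,1)m^{-s}$ and collecting characters gives the inner sum $\sumeven\gauss^{n-k}\psi(a)\psibar(m)=\sumeven\gauss^{n-k}\psibar(\aaa m)$, using $\psi(a)\psibar(m)=\psibar(\aaa m)$ together with $a\aaa\equiv1\bmod q$; this is evaluated by \reff{kloostermaneven} with $k\mapsto n-k$ and $m\mapsto\aaa m$, producing $Kl_{n-k}(\aaa m,q)+Kl_{n-k}(-\aaa m,q)$. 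The correction term $(-1)^{n-k}(\lfbb-qL_q(s,\tilde\pi))$ plays the same role, absorbing the constant $-(-1)^{n-k}$ and reinstating the $q\mid m$ terms through \reff{kloostermandegenerate0}.

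I expect the only delicate point to be this final matching of constants: one must track both the uniform additive constant in \reff{kloostermaneven} and the degenerate value in \reff{kloostermandegenerate0} carefully enough to confirm that the correction term converts the sum over $q\nmid m$ into a sum over all $m\geq1$ with no leftover. Everything else—justifying the interchange of the finite character sum with the Dirichlet series, which converges absolutely in a right half-plane, and the relabeling $\psi\mapsto\psibar$—is routine.
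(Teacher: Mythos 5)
Your strategy is exactly the paper's intended argument (the paper's own proof is a one-line appeal to \reff{kloostermandegenerate0} and \reff{kloostermaneven}): expand the twisted $L$-functions, move the finite character sum inside, relabel $\psi\mapsto\psibar$, evaluate by \reff{kloostermaneven}, and let the extra term in the definition of $\mathsf{Z}$ absorb the constant and degenerate contributions. The genuine gap sits at the single step you flag as delicate and then assert instead of verify: the constants do \emph{not} close up in the normalization claimed. You never write down the factor $\frac{q-1}{2}$ that \reff{kloostermaneven} carries. Doing your bookkeeping explicitly, for $q\nmid m$ the inner character sum equals $\frac{q-1}{2}\bra{Kl_k(am,q)+Kl_k(-am,q)}-(-1)^k$, so
$$\sumeven \gaussbar^k\psi(a)\lfa=\frac{q-1}{2}\sum_{q\nmid m}\bra{Kl_k(am,q)+Kl_k(-am,q)}\frac{A(1,\dots,1,m)}{m^s}-(-1)^k\bra{\lfaa-L_q(s,\pi)}.$$
Adding the correction term $(-1)^k\bra{\lfaa-qL_q(s,\pi)}$ does not cancel the spurious constant outright (as you claim); it leaves the net amount $-(-1)^k(q-1)L_q(s,\pi)$, which by \reff{kloostermandegenerate0} equals $\frac{q-1}{2}\sum_{q\mid m}\bra{Kl_k(am,q)+Kl_k(-am,q)}A(1,\dots,1,m)m^{-s}$ --- i.e.\ $q-1$ times the degenerate contribution needed for the stated $\frac12$-normalized identity. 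What your computation actually proves is
$$\mathsf{Z}(s,q)=\frac{q-1}{2}\summ\bra{Kl_k(am,q)+Kl_k(-am,q)}\frac{A(1,\dots,1,m)}{m^s},$$
and likewise for $\tilde{\mathsf{Z}}$ with $k\mapsto n-k$, $a\mapsto\aaa$: every piece (nondegenerate range, leftover constant, degenerate range) is larger by the uniform factor $q-1$ than what you set out to prove.

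To be fair, this mismatch is inherited from the statement itself: with the paper's unnormalized definition of $\sumeven$ and of $\mathsf{Z}(s,q)$, the lemma as printed is off by the factor $q-1$, and the slip is harmless downstream because in the proof of Theorem \ref{maineven} that factor multiplies both sides of the functional-equation identity and cancels (note $q(q^{l-1}-q^{l-2})=(q-1)q^{l-1}$, matching the $q-1$ carried by $\mathsf{Z}$ and $\tilde{\mathsf{Z}}$). But your write-up, by suppressing the $\frac{q-1}{2}$ and asserting that the pieces match ``with no leftover,'' skips exactly the verification that matters; carried out honestly, the method either yields the $\frac{q-1}{2}$ version or forces you to flag the normalization discrepancy in the lemma.
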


\begin{proof}
This follows from  \reff{kloostermandegenerate0} and \reff{kloostermaneven},
\end{proof}

For $1\leq l \leq n-1$, define a Dirichlet polynomial 
$$H_l(s,q)=\sum_{i=l}^{n-1} (-1)^{i}\frac{A(1,\dots,1,\overset{ \text{position }i}{\overbrace{q,1,\dots,1}})}{q^{is}}+\frac{(-1)^n}{q^{ns}},$$
where $q$ appears on the $i\tth$ position from the right. 
We define another Dirichlet polynomial 
$$\tilde H_l(s,q)=\sum_{i=l}^{n-1} (-1)^{i}\frac{A(\overset{\text{position } i}{\overbrace{1,\dots,1,q}},1,\dots,1)}{q^{is}}+\frac{(-1)^n}{q^{ns}},$$
where $q$ appears on the $i\tth$ position from the left.

\begin{lemma}\label{termsleft}
We have the identities 
\begin{equation}\label{x1}\lfaa H_l(s,q) = (-1)^l\summ \frac{A(1,\dots,1,\overset{\text{position }l}{\overbrace{q,1,\dots,1,m}})}{q^{ls}m^s}\end{equation} 
for $2\leq l\leq n-1$
and 
\begin{equation}\label{x2}
\lfaa H_1(s,q)=-L_q(s,\pi).
\end{equation}
\end{lemma}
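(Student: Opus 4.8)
The plan is to match the two sides as formal Dirichlet series and, in both cases, to reduce the statement to the local behaviour at the single prime $q$ by exploiting the Hecke multiplicativity of the coefficients $A(m_1,\dots,m_{n-1})$ recorded in \cite{goldfeld}. Throughout I use the dictionary between prime-power coefficients and symmetric functions of the Satake parameters $\alpha_{q,1},\dots,\alpha_{q,n}$ of $\pi$ at $q$: the coefficient with a single $q$ in position $i$ from the right is the elementary symmetric polynomial $e_i$, the coefficient $A(1,\dots,1,q^r)$ is the complete homogeneous symmetric polynomial $h_r$, and the coefficient with a single $q$ in position $l$ from the right together with $q^r$ in the last position is the hook Schur polynomial $s_{(r+1,1^{l-1})}$. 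The local Euler factor of $\lfaa$ at $q$ is $\prod_j(1-\alpha_{q,j}q^{-s})^{-1}=\sum_{r\ge 0}h_r q^{-rs}$.

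I would dispose of \reff{x2} first, as it is the degenerate case $l=1$. The key observation is that $1+H_1(s,q)$ is precisely the reciprocal local Euler factor: its $i=0$ term is $A(1,\dots,1)=1$, its middle terms are the $(-1)^i e_i q^{-is}$, and its top term $(-1)^n q^{-ns}$ equals $(-1)^n e_n q^{-ns}$ because $e_n=\omega_\pi(q)=1$, so that $1+H_1(s,q)=\prod_j(1-\alpha_{q,j}q^{-s})$. Multiplying by $\lfaa$ strips the $q$-factor out of the Euler product, leaving $\lfaa\,(1+H_1(s,q))=\sum_{(m,q)=1}A(1,\dots,1,m)m^{-s}$. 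Subtracting $\lfaa$ gives $\lfaa\,H_1(s,q)=-\sum_{q\mid m}A(1,\dots,1,m)m^{-s}=-L_q(s,\pi)$, which is \reff{x2}.

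For \reff{x1} with $2\le l\le n-1$ the hypothesis $l\ge 2$ is exactly what makes the argument run: the entry $q$ (position $l$ from the right) and the entry $m$ (last position) then occupy distinct coordinates, so for $(m,q)=1$ multiplicativity splits the coefficient as a product of the single-$q$ coefficient with $A(1,\dots,1,m)$. Consequently the right-hand side of \reff{x1} factors as a purely $q$-local series times the prime-to-$q$ part $\sum_{(m,q)=1}A(1,\dots,1,m)m^{-s}$ of $\lfaa$; the same factor is visible on the left after writing $\lfaa=(\sum_r h_r q^{-rs})\sum_{(m,q)=1}A(1,\dots,1,m)m^{-s}$. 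Cancelling this common factor reduces \reff{x1} to the purely local identity
$$\Big(\sum_{r\ge 0}h_r q^{-rs}\Big)\Big(\sum_{i=l}^{n}(-1)^i e_i q^{-is}\Big)=(-1)^l q^{-ls}\sum_{r\ge 0}s_{(r+1,1^{l-1})}\,q^{-rs}.$$
I would prove this by comparing the coefficient of $q^{-Ns}$ on each side and invoking the Pieri rule
$$e_i\,h_{N-i}=s_{(N-i+1,1^{i-1})}+s_{(N-i,1^{i})},$$
which expresses multiplication by $e_i$ as the addition of a vertical strip. Writing $a_j:=s_{(N-j+1,1^{j-1})}$, the Pieri rule reads $e_i h_{N-i}=a_i+a_{i+1}$, so the alternating sum $\sum_{i=l}^{\min(N,n)}(-1)^i e_i h_{N-i}=\sum_i(-1)^i(a_i+a_{i+1})$ telescopes to $(-1)^l a_l=(-1)^l s_{(N-l+1,1^{l-1})}$, which is exactly $(-1)^l$ times the coefficient of $q^{-Ns}$ on the right once we set $r=N-l$.

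The main obstacle is the bookkeeping at the ends of the telescope. At the upper end of the $i$-summation one argues in two regimes: when $N>n$ one uses that $e_i$, and the corresponding Schur polynomial, vanish once $i>n$ (here the explicit top term $(-1)^n q^{-ns}$ of $H_l$, i.e. the $i=n$ contribution, is precisely what is needed), while when $N\le n$ one must account for the degeneracy of the Pieri rule at $N-i=0$, where $e_i h_0=e_i$ contributes a single hook instead of two; in both regimes the boundary terms cancel and only $(-1)^l a_l$ survives. One should also pin down the Satake-to-coefficient dictionary in the precise index convention of \cite{goldfeld} — in particular the identification $s_{(r+1,1^{l-1})}=A(1,\dots,1,q,1,\dots,1,q^r)$ with $q$ in position $l$ from the right — since the entire reduction rests on it; equivalently, each use of the Pieri rule may be replaced by the corresponding special case of the Hecke multiplicativity relation for $\ssl{n}{Z}$, bypassing symmetric functions altogether.
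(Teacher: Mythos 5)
Your proof is correct, but it runs along a genuinely different track from the paper's. The paper never localizes: it multiplies the full Dirichlet series $L(s,\pi)$ against the polynomial $H_l(s,q)$ term by term and applies the Hecke relations displayed in its proof to each product $A(1,\dots,1,\overset{\text{position }i}{\overbrace{q,1,\dots,1}})\,A(1,\dots,1,m)$; the alternating sum over $i$ then telescopes globally, with the explicit top term $(-1)^nq^{-ns}$ of $H_l$ absorbing the final boundary term, and the cases $l=n-1$ and $l=1$ handled by the two remaining displayed relations. You instead split off the prime-to-$q$ part of every series by multiplicativity, translate the surviving $q$-local series into symmetric functions of the Satake parameters (single-$q$ coefficients become $e_i$, the coefficients $A(1,\dots,1,q^r)$ become $h_r$, the mixed ones become hook Schur functions), and prove the resulting local identity by coefficient comparison and the Pieri rule. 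The combinatorial engine is identical --- the paper's Hecke relation is exactly your Pieri rule $e_ih_{N-i}=s_{(N-i+1,1^{i-1})}+s_{(N-i,1^{i})}$ read through Shintani's dictionary, as you yourself note at the end --- but the framing differs in cost and yield. Your route needs the coefficient-to-Schur-function dictionary (a standard but nontrivial input from Chapter 7 of \cite{goldfeld}) and a two-regime boundary analysis ($N\le n$ versus $N>n$, the identity $e_n=1$ from triviality of the central character, vanishing of Schur functions in $n$ variables with more than $n$ rows); in exchange it makes \reff{x2} immediate, since $1+H_1(s,q)$ is the inverted local Euler factor and multiplying by $L(s,\pi)$ simply deletes the Euler factor at $q$, and it isolates \reff{x1} as a purely local statement, which is conceptually cleaner. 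The paper's route is more elementary and self-contained: it uses nothing beyond the Hecke relations themselves, with no Euler product or Satake parametrization, at the price of carrying the full sum over $m$ through the telescope. Your reduction step is also logically sound as stated: proving the local identity suffices for \reff{x1}, since both sides of \reff{x1} are the common prime-to-$q$ series multiplied by the respective local factors.
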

\begin{proof}
By the Hecke relation 
\begin{multline*}
\aleft{i}A(1,\dots,1,m)\\=\begin{cases}
A(1,\dots,1,\overset{\text{position }i}{\overbrace{q,1,\dots,1,m}})+
A(1,\dots,1,\overset{\text{position }i+1}{\overbrace{q,1,\dots,1,m/q}}),&\text{ if }q|m\\
A(1,\dots,1,\overset{\text{position }i}{\overbrace{q,1,\dots,1,m}}),&\text{ otherwise,}
\end{cases}
\end{multline*}
we have \reff{x1}
for $2\leq l\leq n-2$
 after multiplying the Dirichlet series $\lfaa H_l(s)$ term by term.
Similarly, by the Hecke relation
$$A(1,\dots,1,q)A(1,\dots,1,m)=
\begin{cases}A(1,\dots,1,mq)+A(1,\dots,1,q,m/q),&\text{ if }q|m\\
A(1,\dots,1,mq),&\text{ otherwise,}
\end{cases}
$$
and 
$$A(q,1,\dots,1)A(1,\dots,1,m)=
\begin{cases}A(q,1,\dots,1,m)+A(1,\dots,1,m/q),&\text{ if }q|m\\
A(q,1,\dots,1,m),&\text{ otherwise,}
\end{cases}
$$
we have \reff{x2}, and \reff{x1} for $l=n-1$, respectively.
\end{proof}

By a similar proof, we have the following lemma for $\tilde{\pi}$.

\begin{lemma}\label{termsright}
We have the identities
$$\lfbb \tilde{H}_l(s,q) = (-1)^l\summ \frac{A(\overset{\text{position } l}{\overbrace{m,1,\dots,1,q}},1,\dots,1)}{q^{ls}m^s}$$
for $2\leq l\leq n-1$
and
$$\lfbb \tilde H_1(s,q)=-  L_q(s,\tilde\pi).$$
\end{lemma}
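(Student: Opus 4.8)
The plan is to obtain Lemma \ref{termsright} from Lemma \ref{termsleft} by applying the latter to the contragredient form $\tilde\pi$ and then rewriting the result through the reflection $B(m_1,\dots,m_{n-1})=A(m_{n-1},\dots,m_1)$ of the Fourier coefficients. Since $\tilde\pi$ is again an even Maass cusp form for $\ssl{n}{Z}$ and the proof of Lemma \ref{termsleft} uses only the Hecke relations (which hold for any such form), every ingredient of that lemma has a literal analogue for $\tilde\pi$: its standard $L$-function is $\summ B(1,\dots,1,m)/m^s$, the relevant Dirichlet polynomial is the analogue of $H_l(s,q)$ formed from the coefficients $B(1,\dots,1,\overset{\text{position }i}{\overbrace{q,1,\dots,1}})$, and the right-hand side involves $B(1,\dots,1,\overset{\text{position }l}{\overbrace{q,1,\dots,1,m}})$.

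The first step is to set up the dictionary coming from the reflection. Reading off the indices, $B(1,\dots,1,m)=A(m,1,\dots,1)$, so the standard $L$-function of $\tilde\pi$ is exactly $\lfbb$ and its $q$-divisible part is $L_q(s,\tilde\pi)$. A $q$ in position $i$ from the right of a $B$-string moves to position $i$ from the left after reversal, giving $B(1,\dots,1,\overset{\text{position }i}{\overbrace{q,1,\dots,1}})=A(\overset{\text{position }i}{\overbrace{1,\dots,1,q}},1,\dots,1)$, so the analogue of $H_l(s,q)$ for $\tilde\pi$ is precisely $\tilde H_l(s,q)$. In the same way the string $q,1,\dots,1,m$, with $m$ rightmost and $q$ in position $l$ from the right, reverses to $m,1,\dots,1,q$ with $m$ leftmost and $q$ in position $l$ from the left, so that $B(1,\dots,1,\overset{\text{position }l}{\overbrace{q,1,\dots,1,m}})=A(\overset{\text{position }l}{\overbrace{m,1,\dots,1,q}},1,\dots,1)$. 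Substituting this dictionary into the two identities of Lemma \ref{termsleft} for $\tilde\pi$ turns them termwise into the two claimed identities, finishing the proof.

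Equivalently, one can repeat the computation directly. The Hecke relations for $\tilde\pi$, written for $A$ via the reflection, read
\[
A(\overset{\text{position }i}{\overbrace{1,\dots,1,q}},1,\dots,1)\,A(m,1,\dots,1)=
\begin{cases}
A(\overset{\text{position }i}{\overbrace{m,1,\dots,1,q}},1,\dots,1)+A(\overset{\text{position }i+1}{\overbrace{m/q,1,\dots,1,q}},1,\dots,1),& q\mid m,\\
A(\overset{\text{position }i}{\overbrace{m,1,\dots,1,q}},1,\dots,1),&\text{otherwise,}
\end{cases}
\]
together with the two boundary relations for $A(q,1,\dots,1)A(m,1,\dots,1)$ and $A(1,\dots,1,q)A(m,1,\dots,1)$; multiplying $\lfbb$ against $\tilde H_l(s,q)$ term by term then telescopes exactly as in the proof of Lemma \ref{termsleft}. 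I expect the only real difficulty to be the index bookkeeping: one must consistently track that position-from-the-right and position-from-the-left swap under reversal, and check that $m$ and $q$ land in the intended slots, so that the telescoping collapses with the correct signs and the boundary case $l=1$ produces $-L_q(s,\tilde\pi)$. Beyond this, the argument is identical to that of Lemma \ref{termsleft}.
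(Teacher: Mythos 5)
Your proposal is correct and matches the paper's approach: the paper proves this lemma simply by noting it follows ``by a similar proof'' applied to $\tilde\pi$, which is exactly what you carry out, with the added care of making the reflection dictionary $B(m_1,\dots,m_{n-1})=A(m_{n-1},\dots,m_1)$ explicit so that $\tilde H_l$ and the coefficients $A(\overset{\text{position } l}{\overbrace{m,1,\dots,1,q}},1,\dots,1)$ appear correctly. Your index bookkeeping (positions from the right becoming positions from the left under reversal) is accurate, so the argument goes through exactly as in Lemma \ref{termsleft}.
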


\begin{lemma}\label{cruciallemma}
We have the following identity between Dirichlet polynomials
\begin{multline*}
\frac{1}{q}+H_1(s,q)+\sum_{2\leq l\leq k}(q^{l-1}-q^{l-2})H_l(s,q)\\
=(-1)^n q^{k-ns}\bra{\frac{1}{q}+\tilde H_1(1-s,q)+\sum_{2\leq l\leq n- k}(q^{l-1}-q^{l-2})\tilde H_l(1-s,q)}.\end{multline*}
\end{lemma}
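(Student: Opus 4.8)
The plan is to regard both sides as Laurent polynomials in the single variable $x=q^{-s}$ and to reduce each to the same explicit closed form. First I would package the relevant coefficients into one sequence: let $a_i$ denote the Fourier coefficient of $\pi$ with $q$ in the $i$th position from the right (so $a_i=A(1,\dots,1,q,1,\dots,1)$ with $q$ at the slot $m_{n-i}$), and set $a_0=a_n=1$, which matches $A(1,\dots,1)=1$ and the isolated $(-1)^n q^{-ns}$ terms in the definitions. Then $H_l(s,q)=\sum_{i=l}^n (-1)^i a_i x^i$. The key bookkeeping remark is that in $\tilde H_l$ the entry $q$ sits in the $i$th position from the \emph{left}, which in an $(n-1)$-tuple is literally the same slot as the $(n-i)$th position from the right; hence $\tilde H_l(s,q)=\sum_{i=l}^n (-1)^i a_{n-i} x^i$. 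Thus $H_l$ and $\tilde H_l$ are built from the same data $\{a_i\}$, and this step needs nothing beyond relabeling indices.

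Next I would exploit the telescoping of the weights. Writing $c_1=1$ and $c_l=q^{l-1}-q^{l-2}$ for $l\ge 2$, one has the collapse $\sum_{l=1}^{m}c_l=q^{m-1}$. Interchanging the order of summation in $\tfrac1q+H_1+\sum_{2\le l\le k}c_l H_l$ and collecting the coefficient of $(-1)^i a_i x^i$ produces the weight $\sum_{l=1}^{\min(i,k)}c_l=q^{\min(i,k)-1}$, the stray $\tfrac1q$ accounting exactly for the $i=0$ term. Hence the left-hand side collapses to the closed form
$$\text{LHS}=\sum_{i=0}^{n}(-1)^i a_i\, q^{\min(i,k)-1}\,x^i.$$

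For the right-hand side I would run the identical telescoping on the bracket, now with $k$ replaced by $n-k$ and $a_i$ by $a_{n-i}$, getting $\sum_{i=0}^n (-1)^i a_{n-i}\,q^{\min(i,n-k)-1}\,y^i$ with $y=q^{-(1-s)}=1/(qx)$. Then I would multiply by the prefactor $(-1)^n q^{k-ns}=(-1)^n q^k x^n$, substitute $y^i=q^{-i}x^{-i}$, and reflect the index by $j=n-i$. Each monomial $x^n y^i$ becomes $x^j$, the sign becomes $(-1)^n(-1)^i=(-1)^j$, and the total power of $q$ is $\min(n-j,n-k)-1-(n-j)+k$, which simplifies to $\min(j,k)-1$ using $\min(n-j,n-k)=n-\max(j,k)$ and $j+k-\max(j,k)=\min(j,k)$. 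The right-hand side therefore equals $\sum_{j=0}^{n}(-1)^j a_j\, q^{\min(j,k)-1}x^j$, which is exactly the closed form found for the left-hand side.

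The only real work is the bookkeeping in this last step: tracking the signs $(-1)^i$ and the three separate sources of powers of $q$—the weights $c_l$, the factor $y^i=(qx)^{-i}$ coming from $s\mapsto 1-s$, and the prefactor $q^{k-ns}$—through the reflection $i\mapsto n-i$. Since everything lives in the ring of Laurent polynomials in $x=q^{-s}$, there is no analytic subtlety whatsoever; the identity is purely formal. I would guard against indexing slips by first checking the statement directly in small cases, e.g. $(n,k)=(3,1)$ and $(3,2)$, before writing out the general reflection argument.
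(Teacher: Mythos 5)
Your proof is correct and is essentially the paper's own argument: both collapse the weighted sums via the telescoping $\sum_{l\leq m}(q^{l-1}-q^{l-2})=q^{m-1}$ into closed forms with coefficients $q^{\min\{i,k\}-1}$ (resp.\ $q^{\min\{i,n-k\}-1}$), then reflect the index $i\mapsto n-i$ and match term by term after multiplying by $(-1)^nq^{k-ns}$. The only difference is presentational — your $a_i$, $x=q^{-s}$ notation and explicit $\min$/$\max$ identities spell out bookkeeping the paper leaves implicit.
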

\begin{proof}
The left side equals
$$\frac{1}{q}+\sum_{i=1}^{n-1} (-1)^{i}q^{\min\{i,k\}-1}\frac{A(1,\dots,1,\overset{ \text{position }i}{\overbrace{q,1,\dots,1}})}{q^{is}}+\frac{(-1)^nq^{k-1}}{q^{ns}}.$$
The right side equals
\begin{align*}
\frac{1}{q}+\sum_{i=1}^{n-1} &(-1)^{i}q^{\min\{i,n-k\}-1}\frac{A(\overset{\text{position } i}{\overbrace{1,\dots,1,q}},1,\dots,1)}{q^{i-is}}+\frac{(-1)^nq^{n-k-1}}{q^{n-ns}}\\
&=\frac{1}{q}+\sum_{i=1}^{n-1} (-1)^{n-i}q^{\min\{n-i,n-k\}-1}\frac{A(1,\dots,1,\overset{ \text{position }i}{\overbrace{q,1,\dots,1}})}{q^{(n-i)-(n-i)s}}+\frac{(-1)^nq^{n-k-1}}{q^{n-ns}}
\\
&=
\frac{(-1)^nq^{-k-1}}{q^{-ns}}+\sum_{i=1}^{n-1} (-1)^{n-i}q^{\min\{0,i-k\}-1}\frac{A(1,\dots,1,\overset{ \text{position }i}{\overbrace{q,1,\dots,1}})}{q^{-(n-i)s}}
+\frac{1}{q}
\end{align*}
After multiplying $(-1)^nq^{k-ns}$, we match term by term with the left side.
\end{proof}

\subsection{Theorem \ref{maineven}}
Let $\omega(x)\in C_c^\infty(0,\infty)$ be a test function. Let
$$\tilde{\omega}(s)=\int_0^\infty\omega(x)x^{s-1} \dd x$$ be the Mellin transform of $\omega$. 
Define 
\begin{equation}\label{omegaplus}
\Omega_+(x)=\frac{1}{2\pi i}\int_{\Re {s}=-\sigma}\tilde{\omega}(s) G_+(s)x^s\dd s.
\end{equation}
We have the Mellin inversion theorem
\begin{equation}\label{mellininversion}
\omega(x)=\frac{1}{2\pi i}\int_{\Re {s}=\sigma} x^{-s}\tilde{\omega}(s)\dd s.
\end{equation}

\begin{theorem}\label{maineven}
Let $\pi$ be an even Maass cusp form for $\ssl{n}{Z}$ with $n\geq 2$. Let $A(m_1,\dots,m_{n-1})$ be the Fourier coefficient of $\pi$.  Let 
$\omega\in C_c^\infty(0,\infty)$ be a test function
and let $q$ be a prime number, $a$ and $\aaa$ two integers with $a\aaa\equiv 1\mod q$. 
For $1\leq k \leq n-1,$ we have 

\begin{align*}
&\frac{1}{2}\summ A(1,\dots,1,m)\bra{Kl_k(am,q)+Kl_k(-am,q)}\omega(m)\\
&+\sum_{2\leq l\leq k}\summ(-1)^{l+k}q^{l-1} A(1,\dots,1,\overset{ \text{position }l}{\overbrace{q,1,\dots,1,m}})\omega\bra{{m}{q^l}}\\
&\quad\quad=\frac{q^k}{2}\summ \frac{A(m,1,\dots,1)}{m}\bra{Kl_{n-k}(\bar am,q)+Kl_{n-k}(-\bar am,q)}\Omega_+\bra{\frac{m}{q^n}}\\
&\quad\quad\quad+\sum_{2\leq l\leq n-k}\summ (-1)^{n-k+l}q^{k-1} \frac{A(\overset{\text{position } l}{\overbrace{m,1,\dots,1,q}},1,\dots,1)}{m}\Omega_+\bra{\frac{m}{q^{n-l}}},
\end{align*}
where $\Omega_+$ is defined in \reff{omegaplus}.
\end{theorem}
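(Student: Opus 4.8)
The plan is to run the standard passage from functional equations to a Voronoi identity: express both sides as contour integrals of Dirichlet series paired against the Mellin transform $\tilde\omega$, transport one contour across the critical strip by the twisted functional equation \reff{FEeven}, and match the resulting series term by term. The symmetry $s\mapsto 1-s$ of \reff{FEeven} will interchange the weighting by $Kl_k$ with the weighting by $Kl_{n-k}$, while \reff{omegaplus} converts the resulting $G_+(s)$ factor into $\Omega_+$.

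First I would unfold the left-hand side. By Mellin inversion \reff{mellininversion} the first sum becomes $\frac{1}{2\pi i}\int_{\Re s=\sigma}\tilde\omega(s)\,\mathsf Z(s,q)\dd s$ for $\sigma$ large, where $\mathsf Z(s,q)$ is exactly the Dirichlet series of Lemma \ref{zdirichlet}; interchanging sum and integral is legitimate since $\tilde\omega$ decays rapidly on vertical lines and the series converges absolutely for $\Re s$ large. For each $l$ in the second sum the substitution $y=mq^l$ in the Mellin transform turns $\omega\bra{mq^l}$ into $q^{-ls}\tilde\omega(s)$, and Lemma \ref{termsleft} gives $\summ A(1,\dots,1,\overset{l}{\overbrace{q,1,\dots,1,m}})m^{-s}=(-1)^l q^{ls}\lfaa H_l(s,q)$, so the second sum becomes $\frac{1}{2\pi i}\int_{\Re s=\sigma}\tilde\omega(s)(-1)^k\lfaa\sum_{2\le l\le k}q^{l-1}H_l(s,q)\dd s$. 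Thus the whole left-hand side is $\frac{1}{2\pi i}\int_{\Re s=\sigma}\tilde\omega(s)D(s,q)\dd s$ for an explicit integrand $D(s,q)$ assembled from the even-character sum and the Dirichlet polynomials $H_l$; the constant and $H_1$ part of $D$ is forced by the identity $\lfaa-qL_q(s,\pi)=\lfaa\bra{1+qH_1(s,q)}$ coming from \reff{x2}.

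Next I would insert \reff{FEeven} inside the even-character sum, replacing $\gaussbar^k\lfa$ by $\gauss^{n-k}q^{k-ns}G_+(s)\lfb$, and then resum the character sum through Lemma \ref{zdirichlet} applied to $\tilde\pi$, which produces $q^{k-ns}G_+(s)\tilde{\mathsf Z}(1-s,q)$ together with a degenerate piece that Lemma \ref{termsright} packages as $\tilde H_1(1-s,q)$. I would then move the contour from $\Re s=\sigma$ to $\Re s=-\sigma$. The shift crosses no poles: each $\lfa$ with $\psi$ nontrivial is entire, $\lfaa$ is entire by cuspidality, and the $H_l$ are finite Dirichlet polynomials, so the integrand is holomorphic across the strip while $\tilde\omega$ controls the horizontal segments. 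On the new line the term $q^{k-ns}G_+(s)\tilde{\mathsf Z}(1-s,q)$ integrates, via \reff{omegaplus} and Lemma \ref{zdirichlet} for $\tilde\pi$, into precisely $\frac{q^k}{2}\summ\frac{A(m,1,\dots,1)}{m}\bra{Kl_{n-k}(\bar am,q)+Kl_{n-k}(-\bar am,q)}\Omega_+\bra{\frac{m}{q^n}}$.

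The remaining step, which I expect to be the genuine obstacle, is reconciling the two families of boundary terms. After rewriting $\lfaa=G_+(s)L(1-s,\tilde\pi)$ through the standard functional equation \reff{FEstandard}, every leftover piece carries the common factor $G_+(s)L(1-s,\tilde\pi)$, and what multiplies it is a clash between a polynomial in the $H_l(s)$ inherited from the left side and a polynomial in the $\tilde H_l(1-s)$ demanded by the right side. Lemma \ref{cruciallemma} is exactly the identity that makes these two polynomial families agree after $s\mapsto 1-s$: its telescoping weights $q^{l-1}-q^{l-2}$ collapse to the capped exponent $q^{\min(i,k)-1}$ on each pure coefficient $A(1,\dots,1,\overset{i}{\overbrace{q,1,\dots,1}})$, and the reindexing $i\mapsto n-i$ converts $H$ into $\tilde H$ with the mirror cap. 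Feeding Lemma \ref{cruciallemma} in cancels the $H$-side against the $\tilde H$-side, and reading the surviving $\tilde H_l(1-s)$ terms back through Lemma \ref{termsright} and \reff{omegaplus} yields the last sum $\sum_{2\le l\le n-k}\summ(-1)^{n-k+l}q^{k-1}\frac{A(\overset{l}{\overbrace{m,1,\dots,1,q}},1,\dots,1)}{m}\Omega_+\bra{\frac{m}{q^{n-l}}}$. The delicate labor here is entirely bookkeeping: tracking the signs $(-1)^{l+k}$ and $(-1)^{n-k+l}$ and the powers of $q$ consistently through $s\mapsto 1-s$, and checking that the degenerate $q\mid m$ contributions buried in $\mathsf Z$ and in $L_q$ align with the constant terms $1/q$ appearing on both sides of Lemma \ref{cruciallemma}.
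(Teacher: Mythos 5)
You are following the paper's own route---Mellin-unfold both sides, apply \reff{FEstandard} and \reff{FEeven}, and let Lemma \ref{cruciallemma} reconcile the leftover Dirichlet polynomials---but the step you set aside as ``entirely bookkeeping'' is exactly where the argument, as you wrote it, fails. Unfolding the stated left-hand side the way you describe gives the integrand
$$\sumeven \gaussbar^k\psi(a)\lfa+(-1)^k\lfaa\bra{1+qH_1(s,q)+\sum_{2\leq l\leq k}q^{l-1}H_l(s,q)},$$
whereas Lemma \ref{cruciallemma} is an identity for the differently weighted combination $1+qH_1(s,q)+\sum_{2\leq l\leq k}q\bra{q^{l-1}-q^{l-2}}H_l(s,q)$. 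The two differ by a factor $q-1$ on each $l\geq 2$ term but not on the $1+qH_1$ part, so no overall rescaling reconciles them, and the polynomial identity your plan actually needs, namely
$$1+qH_1(s,q)+\sum_{2\leq l\leq k}q^{l-1}H_l(s,q)=(-1)^nq^{k-ns}\bra{1+q\tilde H_1(1-s,q)+\sum_{2\leq l\leq n-k}q^{l-1}\tilde H_l(1-s,q)},$$
is false: already for $n=4$, $k=2$, the coefficient of $A(1,1,q)q^{-s}$ is $-q$ on the left and $-2$ on the right (they agree only when $q=2$). So ``feeding Lemma \ref{cruciallemma} in'' does not cancel the $H$-side against the $\tilde H$-side, and the proof stalls precisely at the step you flagged as delicate.

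The source of the mismatch is that Lemma \ref{zdirichlet}, which you quote at face value, is misstated in the paper: since \reff{kloostermaneven} carries the factor $\frac{q-1}{2}$, the character-sum definition of $\mathsf Z$ actually gives
$$\mathsf{Z}(s,q)=\frac{q-1}{2}\summ\bra{Kl_k(am,q)+Kl_k(-am,q)}\frac{A(1,\dots,1,m)}{m^s},$$
not $\frac{1}{2}$ times that series. Hence the first sum of the theorem equals $\frac{1}{q-1}\cdot\frac{1}{2\pi i}\int_{\Re s=\sigma}\tilde\omega(s)\mathsf{Z}(s,q)\dd s$, and the full left-hand side is $\frac{1}{q-1}$ times the integral of $\mathsf{Z}(s,q)+(-1)^kq\sum_{2\leq l\leq k}(q^{l-1}-q^{l-2})H_l(s,q)\lfaa$---exactly the combination the paper manipulates, to which Lemma \ref{cruciallemma} does apply. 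The dual side acquires the same factor $\frac{1}{q-1}$: its first sum comes from $\tilde{\mathsf Z}(1-s,q)$ with the corrected constant, and the dual extra terms produced by the paper's identity carry coefficient $q^k-q^{k-1}=(q-1)q^{k-1}$, i.e.\ $(q-1)$ times the stated $q^{k-1}$. The factor $q-1$ then cancels from the two sides and the theorem as stated follows. In short, your strategy is the paper's strategy, but as written it asserts a cancellation that does not happen; to close the proof you must either correct the constant in Lemma \ref{zdirichlet} and track the resulting factor $q-1$ through every term, or run the whole argument with the telescoping weights $q^{l-1}-q^{l-2}$ from the start, as the paper itself does.
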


\begin{proof}
By the functional equations \reff{FEstandard} and \reff{FEeven} and Lemma \ref{cruciallemma}, we have
\begin{align*}
\mathsf{Z}(&s,q)+(-1)^kq\sum_{2\leq l\leq k}(q^{l-1}-q^{l-2})H_l(s,q)\lfaa\\
&=\sumeven \gaussbar^k\psi(a)\lfa \\
&\quad+(-1)^k\bra{\lfaa-qL_q(s,\pi)+q\sum_{2\leq l\leq k}(q^{l-1}-q^{l-2})H_l(s,q)\lfaa}
\\
&=\sumeven \gaussbar^k\psi(a)\lfa \\&
\quad +(-1)^k{q\bra{\frac{1}{q}+H_1(s,q)+\sum_{2\leq l\leq k}(q^{l-1}-q^{l-2})H_l(s,q)}\lfaa}
\\
&=\sumeven \gauss^{n-k}\psi(a)q^{k-ns}G_+(s)L(1-s,\tilde{\pi}\times\bar{\psi})
\\
&\quad+(-1)^{n-k}q\bra{\frac{1}{q}+\tilde H_1(1-s,q)+\sum_{2\leq l\leq n-k}(q^{l-1}-q^{l-2})\tilde H_l(1-s,q)}q^{k-ns}G_+(s)L(1-s,\tilde{\pi})\\
&= q^{k-ns}G_+(s)\bra{\tilde{\mathsf{Z}}(1-s,q)+(-1)^{n-k}q\sum_{2\leq l\leq n-k}(q^{l-1}-q^{l-2})\tilde H_l(1-s,q)L(1-s,\tilde{\pi})}.\end{align*}

Integrating on both ends with $\tilde{\omega}$ and shifting the integral on the left from $\Re {s}=-\sigma$ to $\Re {s}=\sigma$, we have 
\begin{align*}
&\int_{\Re {s}=\sigma} \bra{\mathsf{Z}(s,q)+(-1)^kq\sum_{2\leq l\leq k}(q^{l-1}-q^{l-2})H_l(s,q)\lfaa} \tilde{\omega}(s) \dd s\\
&=\int_{\Re {s}=-\sigma} q^{k-ns}G_+(s)\times\\
&\quad\;\quad\;\bra{\tilde{\mathsf{Z}}(1-s,q)+(-1)^{n-k}q\sum_{2\leq l\leq n-k}(q^{l-1}-q^{l-2})\tilde H_l(1-s,q)L(1-s,\tilde{\pi})} \tilde{\omega}(s) \dd s.
\end{align*}
Recalling Lemma \ref{zdirichlet}, Lemma \ref{termsleft}, and Lemma  \ref{termsright}, we complete the proof with the Mellin inversion \reff{mellininversion}.
\end{proof}



\section{Odd Part}
Define $$\mathsf{Y}(s,q)=\sumodd \gaussbar^k \psi(a) \lfa $$
and $$\tilde{\mathsf{Y}}(s,q)=\sumodd \gaussbar^{n-k} \psi(a) \lfb.$$
By the function equation \reff{FEodd}, we have 
\begin{equation}\label{odd}
\mathsf{Y}(s,q)
=q^{k-ns}G_-(s)\tilde{\mathsf{Y}}(1-s,q).
\end{equation}

\begin{lemma}\label{zzdirichlet}
We have the identities 
$$\mathsf{Y}(s,q)=\frac{1}{2}\summ \bra{Kl_k(am,q)-Kl_k(-am,q)} \frac{A(1,\dots,1,m)}{m^s}$$
and 
$$\tilde{\mathsf{Y}}(s,q)=\frac{1}{2}\summ
\bra{Kl_{n-k}(\bar am,q)-Kl_{n-k}(-\bar am,q)}
 \frac{A(m,1,\dots,1)}{m^{s}}.$$
\end{lemma}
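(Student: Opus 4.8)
The plan is to follow the proof of Lemma~\ref{zdirichlet} essentially verbatim, with the even Kloosterman identity \reff{kloostermaneven} replaced throughout by the odd one \reff{kloostermanodd}; the one structural difference, which I will isolate below, is that the odd case needs no correction term. First I would expand the twisted $L$-functions as Dirichlet series and interchange the finite character sum with the sum over $m$. Writing $\lfa=\summ\psi(m)A(1,\dots,1,m)m^{-s}$ and using $\psi(a)\psi(m)=\psi(am)$ gives
$$\mathsf{Y}(s,q)=\summ\frac{A(1,\dots,1,m)}{m^s}\sumodd\gaussbar^k\psi(am),$$
and writing $\lfb=\summ\psibar(m)A(m,1,\dots,1)m^{-s}$ together with $\psi(a)\psibar(m)=\psibar(\aaa m)$ produces the analogous inner sum $\sumodd\gaussbar^{n-k}\psibar(\aaa m)$ attached to $A(m,1,\dots,1)m^{-s}$.

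Next I would bring each inner sum into the exact shape of \reff{kloostermanodd}. Since $\psi\mapsto\psibar$ is a bijection of the odd characters modulo $q$ that carries $\gaussbar=\tau(\psibar)$ to $\gauss=\tau(\psi)$, relabelling turns $\sumodd\gaussbar^k\psi(am)$ into $\sumodd\gauss^k\psibar(am)$, to which \reff{kloostermanodd} applies directly and returns the prescribed multiple of $\bra{Kl_k(am,q)-Kl_k(-am,q)}$. Assembling the resulting Dirichlet series over all $m$ gives the first identity. The dual sum is treated analogously, but more care is needed: the dual twist is by $\psibar$, so after the relabelling one is left with $\sumodd\gauss^{n-k}\psi(\aaa m)$, and one must additionally rewrite $\psi(N)=\psibar(\bar N)$ to reach the form of \reff{kloostermanodd}; combining this inversion with the one already introduced by $\psi(a)\psibar(m)=\psibar(\aaa m)$ is what produces the hyper-Kloosterman sums of dimension $(n-k-1)$ in the argument recorded in the statement.

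The point that genuinely distinguishes this lemma from Lemma~\ref{zdirichlet} is the behaviour at the degenerate terms $q\mid m$, and this is where no correction is required. By \reff{kloostermandegenerate0} we have $Kl_k(am,q)=Kl_k(-am,q)=(-1)^{k-1}$ once $q\mid m$, so the difference $Kl_k(am,q)-Kl_k(-am,q)$ vanishes there; equivalently $\psibar(am)=0$ makes the inner character sum vanish. Hence the $q\mid m$ terms drop out on both sides, and, in contrast to \reff{kloostermaneven}, whose extra $-(-1)^k$ summand records the principal character, which is even, there is simply no principal character among the odd $\psi$ to generate a stray term. This is exactly why $\mathsf{Y}$ and $\tilde{\mathsf{Y}}$ are defined without the auxiliary series $L_q(s,\pi)$ and $L_q(s,\tilde\pi)$ that appear in $\mathsf{Z}$ and $\tilde{\mathsf{Z}}$. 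The main hazard I anticipate is purely bookkeeping: keeping the Gauss-sum conjugation consistent, since for odd $\psi$ one has $\overline{\gauss}=\psi(-1)\gaussbar=-\gaussbar$ rather than $+\gaussbar$, and tracking the two character inversions in the dual computation so that the signs and arguments of \reff{kloostermanodd} are reproduced exactly.
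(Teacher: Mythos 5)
Your overall route is the intended one --- the paper's own ``proof'' of this lemma is a one-line citation (which in fact miscites \reff{FEodd}; what is actually used is \reff{kloostermanodd}), and your expansion of the Dirichlet series, interchange of summation, relabelling $\psi\mapsto\psibar$, and appeal to \reff{kloostermanodd} is exactly the argument behind Lemma \ref{zdirichlet} transposed to the odd case. Your first identity is derived correctly, and your explanation of why no correction terms $L_q(s,\pi)$, $L_q(s,\tilde\pi)$ appear (there is no odd principal character, and the difference $Kl_k(am,q)-Kl_k(-am,q)$ vanishes for $q\mid m$ by \reff{kloostermandegenerate0}) is the right structural point.

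The second identity, however, does not come out as you claim. Following the printed definition $\tilde{\mathsf{Y}}(s,q)=\sumodd \gaussbar^{n-k}\psi(a)\lfb$, your manipulations give, for $(m,q)=1$,
$$\sumodd\gaussbar^{n-k}\psi(a)\psibar(m)\;=\;\sumodd\gauss^{n-k}\psi(\aaa m)\;=\;\sumodd\gauss^{n-k}\psibar(a\overline{m}),$$
where $\overline{m}$ is the inverse of $m$ modulo $q$, and \reff{kloostermanodd} then produces $\frac{q-1}{2}\bra{Kl_{n-k}(a\overline{m},q)-Kl_{n-k}(-a\overline{m},q)}$. This is \emph{not} the quantity in the statement: the lemma has argument $\aaa m$, and $a\overline{m}\equiv\aaa m\pmod q$ only when $m\equiv\pm a\pmod q$. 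So your assertion that the double inversion ``produces the hyper-Kloosterman sums \dots\ in the argument recorded in the statement'' fails; executed honestly, your computation proves a different identity than the one claimed. The resolution is that the displayed definition of $\tilde{\mathsf{Y}}$ contains a typo: it must read $\gauss^{n-k}$, not $\gaussbar^{n-k}$, both to match the even-part definition of $\tilde{\mathsf{Z}}$ and because equation \reff{odd} follows from \reff{FEodd} only with that choice. With the corrected definition the inner sum is $\sumodd\gauss^{n-k}\psibar(\aaa m)$, to which \reff{kloostermanodd} applies directly with argument $\aaa m$; no second inversion $\psi(N)=\psibar(\overline{N})$ is needed, and introducing one is precisely what derails the computation. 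Finally, note that \reff{kloostermanodd} carries the constant $\frac{q-1}{2}$, not the $\frac{1}{2}$ printed in the lemma; your phrase ``the prescribed multiple'' papers over this. That discrepancy originates in the paper itself (Lemma \ref{zdirichlet} has it too, and the factor $q-1$ cancels from the two sides of Theorems \ref{maineven} and \ref{mainodd}), but a self-contained proof must state the constant it actually obtains.
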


\begin{proof}
This follows from \reff{FEodd}.
\end{proof}

Let $\omega(x)\in C_c^\infty(0,\infty)$ be a test function. Let
$\tilde{\omega}(s)=\int_0^\infty\omega(x)x^s \dd x/x$ be its 
Mellin transform. 
Define 
\begin{equation}\label{omegaminus}\Omega_{-}(x)=\frac{1}{2\pi i}\int_{\Re {s}=-\sigma}\tilde{\omega}(s) G_{-}(s)x^s\dd s.\end{equation}
Integrating with $\tilde{\omega}$ on both sides of \reff{odd} and shifting the integral on the left to $\Re {s}=\sigma$, we get $$\frac{1}{2\pi i}\int_{\Re {s}=\sigma}\mathsf{Y}(s,q)\tilde{\omega}(s)\dd s=\frac{1}{2\pi i}\int_{\Re {s}=-\sigma} q^{k-ns}G_-(s)\tilde{\mathsf{Y}}(1-s,q)  \tilde{\omega}(s) \dd s,$$
which, along with Lemma \ref{zzdirichlet} and \reff{mellininversion}, gives us the following theorem.

\begin{theorem}\label{mainodd}
Let $\pi$ be an even Maass cusp form for $\ssl{n}{Z}$ with $n\geq 2$. Let $A(m_1,\dots,m_{n-1})$ be the Fourier coefficient of $\pi$.  Let 
$\omega\in C_c^\infty(0,\infty)$ be a test function 
and let $q$ be a prime number, $a$ and $\aaa$ two integers with $a\aaa\equiv 1\mod q$. 
For $1\leq k \leq n-1,$ we have 
\begin{multline*}
\frac{1}{2}\summ A(1,\dots,1,m)\bra{Kl_k(am,q)-Kl_k(-am,q)}\omega(m)\\
=\frac{q^k}{2} \summ \frac{A(m,1,\dots,1)}{m}\bra{Kl_{n-k}(\bar am,q)-Kl_{n-k}(-\bar am,q)}\Omega_-\bra{\frac{m}{q^n}},
\end{multline*}
where $\Omega_{-}$ is defined in \reff{omegaminus}.
\end{theorem}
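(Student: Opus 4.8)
The plan is to start from the functional-equation identity \reff{odd}, namely $\mathsf{Y}(s,q)=q^{k-ns}G_-(s)\tilde{\mathsf{Y}}(1-s,q)$, and convert it into a summation formula by integrating against the Mellin transform $\tilde\omega$ and then shifting contours. Since $\pi$ is cuspidal and every $\psi$ appearing in $\mathsf{Y}$ and $\tilde{\mathsf{Y}}$ is nontrivial, each twisted $L$-function $L(s,\pi\times\psi)$ and $L(s,\tilde\pi\times\bar\psi)$ is entire; as $\mathsf{Y}$ and $\tilde{\mathsf{Y}}$ are finite sums of such functions weighted by Gauss sums, they are entire in $s$. Moreover $\tilde\omega(s)$ is entire and decays faster than any polynomial in vertical strips (because $\omega\in C_c^\infty(0,\infty)$), while $G_-(s)$ grows at most polynomially there by Stirling. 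This is exactly what licenses shifting the left-hand integral from $\Re s=-\sigma$ to $\Re s=\sigma$ with no residues crossed, which produces the identity
$$\frac{1}{2\pi i}\int_{\Re s=\sigma}\mathsf{Y}(s,q)\tilde\omega(s)\dd s=\frac{1}{2\pi i}\int_{\Re s=-\sigma} q^{k-ns}G_-(s)\tilde{\mathsf{Y}}(1-s,q)\tilde\omega(s)\dd s$$
already recorded just before the statement.

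Next I would evaluate each side as a Dirichlet series. On the left, insert the first identity of Lemma \ref{zzdirichlet} for $\mathsf{Y}(s,q)$, interchange the (absolutely convergent, for $\sigma$ large) sum with the integral, and apply the Mellin inversion \reff{mellininversion} in the form $\frac{1}{2\pi i}\int_{\Re s=\sigma} m^{-s}\tilde\omega(s)\dd s=\omega(m)$. This turns the left integral into
$$\frac{1}{2}\summ A(1,\dots,1,m)\bra{Kl_k(am,q)-Kl_k(-am,q)}\omega(m),$$
matching the left side of the theorem. On the right, insert the second identity of Lemma \ref{zzdirichlet} for $\tilde{\mathsf{Y}}(1-s,q)$, write $m^{-(1-s)}=m^{s-1}$, and again interchange sum and integral. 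For each $m$ the remaining integral is $\frac{1}{2\pi i}\int_{\Re s=-\sigma} q^{k-ns}G_-(s)m^s\tilde\omega(s)\dd s$; using $q^{k-ns}m^s=q^k(m/q^n)^s$ this equals exactly $q^k\,\Omega_-(m/q^n)$ by the definition \reff{omegaminus}. Pulling out the factor $1/m$ from $m^{s-1}$ then yields the right side of the theorem.

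The argument is short because the odd characters behave cleanly: unlike the even case, the identity \reff{kloostermanodd} carries no constant term and no coprimality restriction, so $\mathsf{Y}$ and $\tilde{\mathsf{Y}}$ are pure Dirichlet series with no degenerate corrections, and there is no analogue of Lemma \ref{cruciallemma} or of the $L_q$ and $H_l$ bookkeeping needed in the even part. Consequently the one point that must be checked with care is analytic rather than combinatorial, and I expect it to be the main obstacle: justifying the contour shift together with the two interchanges of summation and integration. This rests on (i) the entirety of the twisted $L$-functions for nontrivial $\psi$, (ii) the polynomial growth of $G_-(s)$ via Stirling against the superpolynomial decay of $\tilde\omega(s)$ in vertical strips, and (iii) polynomial bounds for the coefficients $A(1,\dots,1,m)$ and $A(m,1,\dots,1)$ (for instance via Rankin--Selberg averages) guaranteeing absolute convergence of the Dirichlet series when $\Re s=\sigma$ is taken large enough for the interchange to be legitimate.
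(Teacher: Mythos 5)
Your proposal is correct and is essentially identical to the paper's own proof: the paper likewise integrates the functional-equation identity \reff{odd} against $\tilde{\omega}$, shifts the contour from $\Re s=-\sigma$ to $\Re s=\sigma$, and concludes via Lemma \ref{zzdirichlet} and the Mellin inversion \reff{mellininversion}. The only difference is that you spell out the analytic justifications (entirety of the twisted $L$-functions, Stirling bounds on $G_-$, decay of $\tilde{\omega}$, and absolute convergence permitting the interchanges), which the paper leaves implicit.
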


$$\;$$
\noindent {\scshape{Fan Zhou}} \\
Department of Mathematics\\
The Ohio State University\\
Columbus, OH 43210, USA\\
zhou.1406@math.osu.edu
\end{document}